\newtheorem{theorem}{Theorem}[section]
\newtheorem{lemma}[theorem]{Lemma}
\numberwithin{equation}{section}
\newtheorem{remark}{Remark}
\newcommand{\bfo}{\mathbf{o}}
\newcommand{\bfn}{\mathbf{n}}
\newcommand{\bfr}{\mathbf{r}}
\newcommand{\TF}{\mathrm{TF}}
\newcommand{\ce}[1]{Ce^{-c{#1}^3}}
\title[Local Transversal Fluctuations in LPP]{ Lower bound for large local transversal fluctuations of Geodesics in Last Passage Percolation}
\author{Pranay Agarwal}
\address{Pranay Agarwal, Department of Mathematics, Indian Institue of Technology, Kanpur, India}
\email{pranayag@iitk.ac.in}
\begin{document}

\maketitle

\begin{abstract}
    For exactly solvable models of planar last passage percolation, it is known that geodesics of length $n$ exhibit transversal fluctuations at scale $n^{2/3}$ and matching (up to exponents) upper and lower bounds for the tail probabilities are available. The local transversal fluctuations near the endpoints are expected to be much smaller; it is known that the transversal fluctuation up to distance $r \ll n$ is typically of the order $r^{2/3}$ and the probability that the fluctuation is larger than $tr^{2/3}$ is at most $\ce{t}$. In this note, we provide a short argument establishing a matching lower bound for this probability.
\end{abstract}

\section{Introduction}
We consider the Exponential Last Passage Percolation model on the integer lattice $\bZ^2$, where one assigns independently sampled values from the exponential distribution to each vertex of the lattice. We study up-right paths from the origin to the point $(n,n)$ in this model, the paths with the maximal weight sum are called the geodesic $\Gamma_n$, which are almost surely unique in this model. It was proved in \cite{Johan} that $\Gamma_n$ shows transversal fluctuations of the order $n^{2/3 + o(1)}$. Later the result was improved in \cite[Theorem 11.1]{Basu2014Aug} and subsequently in \cite[Proposition C.9]{Basu2021May}, which showed that the geodesic $\Gamma_n$ exhibits transversal fluctuations at scale $n^{2/3}$, and the probability that these fluctuations are greater than $tn^{2/3}$ is at most $\ce{t}$. One can establish a matching lower bound from the arguments of \cite[Proposition 1.4]{hammond2018modulus}, even though their arguments focused on the Poissonian LPP model. As one would expect, the transversal fluctuations are much smaller near the endpoints of the geodesic. At a distance $r$ from the endpoints, where $r \ll n$, the fluctuations are typically of the order $r^{2/3}$.

Now we define transversal fluctuations formally as
$$\TF^n(r) :=\max \bc{ \abs{x-y} : (x,y) \in \Gamma_n, 0 
\leq x+y \leq 2r}.$$
Thus, $\TF^n(n)$ is the global transversal fluctuation of $\Gamma_n$ and $\TF^n(r)$ are the local transversal fluctuations. It was shown in \cite[Proposition 2.1]{Balazs2023Aug} that for large $t$, for some constants $c,C,$
    $$\P{\TF^n(r) > tr^{2/3}} \leq C\exp\br{-ct^3}.$$
We will give a more precise statement later as Theorem \ref{transfl}. Our main result gives a matching lower bound for this probability.
 \begin{theorem}
      \label{nproof}
      There exist positive constants $c_1, C_1, r_0, t_0$ such that for all $\epsilon >0$, $r > r_0$, $n \geq r$ and $(1-\epsilon) r^{1/3} > t> t_0$
    $$\P{\TF^n(r) > tr^{2/3}} \geq C_1\exp\br{-c_1t^3}.$$
 \end{theorem}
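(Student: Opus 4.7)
The plan is to use a planting argument on the antidiagonal line at level $r$. Set $a = \lfloor r/2 \rfloor$, pick $\sigma$ in $(tr^{2/3}/2, r/4]$ (non-empty since $t < (1-\epsilon) r^{1/3}$), put $p := (a + \sigma, a - \sigma)$ so that $p$ has offset $2\sigma > tr^{2/3}$, and let $L_r := \bc{v \in \bZ^2 : v^{(1)} + v^{(2)} = 2a}$. Every up-right path from $\bfo$ to $(n,n)$ meets $L_r$ in a unique point, so the crossing $v^* := \Gamma_n \cap L_r$ is the unique maximizer over $v \in L_r$ of $J(v) := T(\bfo, v) + T(v, (n,n)) - w_v$. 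Writing $T_r := \bc{v \in L_r : \abs{v^{(1)} - v^{(2)}} \leq tr^{2/3}}$ for the tube, it suffices to show $\P{v^* \notin T_r} \geq c_1\exp(-C_1 t^3)$.

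Decompose $J(v) = A_v + \tilde C_v$ with $A_v := T(\bfo, v)$ and $\tilde C_v := T(v, (n,n)) - w_v$. The field $\bc{A_v}_{v \in L_r}$ is measurable with respect to the weights in the closed lower half $\bc{z : z^{(1)} + z^{(2)} \leq 2a}$, while $\bc{\tilde C_v}_{v \in L_r}$ is measurable with respect to the weights in the strict upper half, so these fields are independent. I would build events $\mathcal E_{\text{low}}$ and $\mathcal E_{\text{up}}$ on the two halves whose intersection guarantees $A_p - \max_{v \in T_r} A_v \geq \alpha r^{1/3}$ and $\max_{v \in T_r} \tilde C_v - \tilde C_p \leq \alpha r^{1/3}$ simultaneously, for some $\alpha > 0$. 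Then $A_p + \tilde C_p > A_v + \tilde C_v$ for every $v \in T_r$, so $v^* \notin T_r$, and by independence $\P{\mathcal E_{\text{low}} \cap \mathcal E_{\text{up}}} = \P{\mathcal E_{\text{low}}} \P{\mathcal E_{\text{up}}}$.

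For $\mathcal E_{\text{low}}$, use $(\sqrt{a+\sigma} + \sqrt{a-\sigma})^2 = 2r - 2\sigma^2/r + O(\sigma^4/r^3)$ to get $\mathbb{E}[A_p] - \mathbb{E}[A_v] \asymp -t^2 r^{1/3}$ for $v \in T_r$, and take $\mathcal E_{\text{low}}$ to be the intersection of $\bc{A_p \geq \mathbb{E}[A_p] + K_1 t^2 r^{1/3}}$ and $\bc{A_v \leq \mathbb{E}[A_v] + K_2 r^{1/3} \text{ for all } v \in T_r}$. The first event has probability at least $c\exp(-CK_1^{3/2} t^3)$ by the one-point lower bound on the upper tail for exponential LPP at scale $r$, the standard matching counterpart of the upper-tail estimate feeding into Theorem \ref{transfl}. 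The second event has high probability by union-bounding the one-point upper tail over the $\lesssim tr^{2/3}$ points in $T_r$, as long as $K_2$ exceeds a constant multiple of $t^2 + (\log r)^{2/3}$. For $\mathcal E_{\text{up}}$, the Brownian-like regularity of the profile $v \mapsto \tilde C_v$ on $L_r$ gives centered fluctuations of $\tilde C_v - \tilde C_p$ of typical size $\sqrt{tr^{2/3}} = \sqrt t\, r^{1/3}$ across $T_r$, and the mean shift $\mathbb{E}[\tilde C_v] - \mathbb{E}[\tilde C_p] = O(\sigma^2/(n-r/2))$ is either $o(r^{1/3})$ (when $r \ll n$) or $O(t^2 r^{1/3})$ (when $r \sim n$); in either case, by taking $\alpha$ a sufficiently large multiple of $\sqrt t$, respectively of $t^2$, the event $\bc{\max_{v \in T_r}(\tilde C_v - \tilde C_p) \leq \alpha r^{1/3}}$ has probability bounded below by a positive constant.

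The main obstacle is that the two constituents of $\mathcal E_{\text{low}}$ are monotone in the lower-half weights in opposite directions, so FKG gives negative correlation and forbids a direct product lower bound. I would instead use $\P{\mathcal E_{\text{low}}} \geq \P{A_p \text{ large}} - \P{\max_{v \in T_r} A_v \text{ too large}}$ and tune $K_2$ so that the subtracted term is at most half of the first. This forces $K_2^{3/2} \gtrsim K_1^{3/2} t^3 + \log(tr^{2/3})$, so $K_2 = O(t^2)$ suffices only when $t \gtrsim (\log r)^{1/3}$. For $t \in [t_0, C_0(\log r)^{1/3}]$ the target $\exp(-Ct^3)$ is only polynomially small in $r$, and applying the argument at $t = C_0(\log r)^{1/3}$ combined with monotonicity of $t \mapsto \P{\TF^n(r) > tr^{2/3}}$ recovers the claimed bound after possibly enlarging $C_1$.
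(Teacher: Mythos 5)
There is a genuine gap at the heart of your construction of $\mathcal E_{\text{low}}$, and it is exactly the obstruction that the paper's (and Hammond's) rectangle-localization is designed to avoid. Your two requirements on $K_2$ are incompatible. On one hand, to guarantee the deterministic gap $A_p - \max_{v \in T_r} A_v \geq \alpha r^{1/3}$ you must absorb the curvature penalty $\mathbb{E}[A_v] - \mathbb{E}[A_p] \approx (\sigma^2 - s^2)/a$, which is at least $t^2 r^{1/3}/2$ for $v$ at the center of the tube (since $\sigma > tr^{2/3}/2$ and $a \approx r/2$); hence you need $K_2 \leq (K_1 - \tfrac12)t^2 - \alpha$. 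On the other hand, your subtraction bound $\P{\mathcal E_{\text{low}}} \geq \P{A_p \text{ large}} - \P{\max_{v} A_v \text{ too large}}$ forces you to lower-bound the first term by the one-point upper-tail \emph{lower} bound $C'e^{-c'(K_1 t^2)^{3/2}}$ and to upper-bound the second by the one-point upper-tail \emph{upper} bound, giving the requirement $c K_2^{3/2} \geq c'(K_1 t^2)^{3/2} + \log(tr^{2/3})$, i.e.\ $K_2 \geq (c'/c)^{2/3} K_1 t^2$. Since necessarily $c' \geq c$ (a lower bound cannot exceed an upper bound), this yields $K_2 \geq K_1 t^2 > (K_1 - \tfrac12)t^2 - \alpha$, a contradiction for every choice of $K_1$; even a single control point $v$ at the center of the tube already defeats the scheme, so the $\log r$ issue you flag is not the binding constraint. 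The fix is structural, not a tuning of constants: the expensive high-weight event must be supported on weights disjoint from those governing the tube field, so that probabilities multiply rather than subtract. This is precisely what the paper does, placing the event $A$ (a high-weight path) inside a thin rectangle $R$ displaced off the diagonal, independent of the complementary events $B, C, D$ and of the zeroed-out geodesic $\Gamma^R$.

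Two further points. First, your $\mathcal E_{\text{up}}$ step invokes Brownian-like regularity of the profile $v \mapsto T(v,\bfn)$ along $\cL_r$ to control $\max_{v\in T_r}(\tilde C_v - \tilde C_p)$ at scale $r^{1/3}$; a union bound over the tube with only one-point estimates gives $(\log r)^{2/3} r^{1/3}$, which is not enough, so you would genuinely need profile-regularity inputs well beyond Theorem \ref{tempfl} and Theorem \ref{transfl}. Second, your approach does have one attractive structural feature the paper obtains differently: by working directly at the crossing line of $\Gamma_n$ you treat all $n \geq r$ in one shot, whereas the paper first proves the case $n=r$ (Theorem \ref{thm2}) and then transfers to general $n$ via geodesic ordering and the event $E^R$. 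But once you repair $\mathcal E_{\text{low}}$ by localizing the plant to an off-diagonal rectangle and reconnecting to $\bfo$ and $\bfn$, you will essentially have reconstructed the paper's argument. Also note two small slips: subtracting $w_v$ in $J(v)$ changes the maximizer (with the paper's convention $T(\bfo,v)+T(v,\bfn)$ is already the correct functional and the two summands are already measurable with respect to disjoint half-planes), and the interval $(tr^{2/3}/2, r/4]$ is empty once $t > r^{1/3}/2$, so the stated range of $t$ is not fully covered.
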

This completes the picture for local transversal fluctuations of finite geodesics. The main tools we use to prove the bound are moderate deviation estimates from integrable probability along with the ordering of geodesics. We shall simplify the arguments presented in \cite[Proposition 1.4]{hammond2018modulus} for Exponential LPP and use them to prove Theorem \ref{thm2}. The resulting construction that we get from Theorem \ref{thm2} will then be used to prove Theorem \ref{nproof}.

Some related expressions have appeared in the literature before as well. A similar lower bound for the transversal fluctuation of semi-infinite geodesics was shown in \cite[Theorem 2.8]{Timo} by utilizing stationary exponential LPP. A formula to calculate the probability of the geodesic to pass through a given point was derived in \cite[Theorem 1.1]{liu2022onepoint}. However, as remarked by the author, the complicated expression of their formula makes it seem intangible to use their result to derive any useful asymptotics.

\section{Notation and Preliminaries}
For a point $u \in \bZ^2$, we use $X_u$ to denote the exponential random variable associated with $u$, also known as the passage time at $u$. We use $\mathbf{o}$ to denote the origin and $\mathbf{n}$ to denote the point $(n,n)$ for $n \in \bN$. We represent the line $x+y = T$ by $\cL_T$. Let $\gamma$ be an up-right path from $u$ to $v$, we define passage time of the path $\gamma$ as
$$\ell(\gamma) = \sum_{w \in \gamma \setminus \bc{v}} X_w.$$
We will use $T(u,v)$ to denote the maximal passage time over all up-right paths from $u$ to $v$, also known as the last passage time form $u$ to $v$. $\Gamma_u$ denotes the path achieving maximal length from $\bfo$ to $u$. As a special case, we will use $\Gamma_n$ to denote the geodesic from $\bfo$ to $\bfn$. We define $f: \bR^2 \times \bR^2 \to \bR$ as 
$$f(u,v) = \begin{cases}
    \br{\sqrt{v_1- u_1} + \sqrt{v_2 - u_2}}^2& \text{if } u \preceq  v,\\
    0 , &\text{otherwise},
\end{cases} $$
where $u \preceq v$ means that $u$ is coordinate wise smaller than $v$. Similarly, we define the functions, 
\begin{align*}
    \phi(u) = u_2+ u_1 && \mathrm{and,} && \psi(u) = u_2 - u_1.
\end{align*}
We use $C, c$ to denote arbitrary constants which may change from equation to equation. We have avoided using floor and ceiling functions in many places to make notations less cumbersome. Adding the floor/ceiling functions will not change our arguments in a non-trivial way.

\subsection{Estimates for temporal fluctuations}  Johansson proved \cite[Remark 1.5]{Jour} that the last passage time is equal in distribution to the top eigenvalue of the Laguerre Unitary Ensemble, and upper bounds on the upper and lower tails on this eigenvalue were proved in \cite[Theorem 2]{LR}. Their arguments gave us matching bounds for the upper tail and an upper bound for the lower tail. A matching lower bound for the lower tail was then proved in \cite[Theorem 2]{Basu2021Apr}. We summarize the results in the following theorem.
\begin{theorem}
\label{tempfl}
    Let $n \in \bN, h \in (0,1]$. For $u =(u_1, u_2) \in \bZ^2$ such that $\phi(u) = n$ and $h \leq \frac{u_1}{u_2} \leq 1/h$. Then there exist positive constants $c_1, C_1, c_2, C_2, c_3, C_3, c_4, C_4, n_0, t_0$ depending on $h$ such that for $n > n_0$ and $t_0 < t< n^{2/3}$,
    \begin{equation}
    \label{1}
    C_1\exp\br{-c_1 t^{3/2}} \leq \P{T(\bfo, u) - f(\bfo, u) > tn^{1/3}} \leq C_2\exp\br{-c_2 t^{3/2}},
    \end{equation}
    \begin{equation}    
    \label{2}
    C_3\exp\br{-c_3 t^{3}} \leq \P{T(\bfo, u) - f(\bfo, u) \leq -tn^{1/3}} \leq C_4\exp\br{-c_4 t^{3}}.
    \end{equation}
    
\end{theorem}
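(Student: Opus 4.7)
The plan is to show that, with probability at least $Ce^{-ct^3}$, the geodesic $\Gamma_n$ crosses $\cL_r$ at transversal displacement $\geq tr^{2/3}$ above the diagonal; the other side follows by reflection symmetry $(i,j) \leftrightarrow (j,i)$. Set $w = \br{(r - tr^{2/3})/2,\ (r + tr^{2/3})/2} \in \cL_r$, so $\psi(w) = tr^{2/3}$; the hypothesis $t < (1-\epsilon) r^{1/3}$ ensures $w$ has positive coordinates. If the exit point of $\Gamma_n$ from $\{\phi \leq r\}$ lies at or above $w$, then $\TF^n(r) \geq tr^{2/3}$ directly from the definition.

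\textbf{Step 1 (one-point large deviation).} A second-order expansion of $f$ yields, uniformly in $n \geq r$,
$$ f(\bfo, \bfn) - f(\bfo, w) - f(w, \bfn) \;=\; \tfrac{1}{2}\, t^2 r^{1/3} \;+\; \tfrac{t^2 r^{4/3}}{4(n-r/2)} \;=\; \Theta\!\br{t^2 r^{1/3}},$$
the upper bound coming from $n - r/2 \geq r/2$. Apply the upper tail \eqref{1} of Theorem \ref{tempfl} with parameter $K t^2$ for a large constant $K$ to be fixed:
$$ \P{T(\bfo, w) - f(\bfo, w) \geq K t^2 r^{1/3}} \;\geq\; C \exp\!\br{-c K^{3/2} t^3} \;\geq\; C_1 \exp\!\br{-c_1 t^3}.$$
Intersecting with the event $\bc{T(w, \bfn) - f(w, \bfn) \geq -L(n-r/2)^{1/3}}$, which has probability $\geq 1/2$ for $L$ large by \eqref{2} and is independent of the first event since the two passage times depend on disjoint vertex supports, we obtain an event $E$ with $\P{E} \geq C_1' \exp\!\br{-c_1 t^3}$ on which $T(\bfo, w) + T(w, \bfn) \geq f(\bfo, \bfn) + (K-1) t^2 r^{1/3} - L(n-r/2)^{1/3}$.

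\textbf{Step 2 (outcompeting every other exit -- the main obstacle).} On $E$, one must still rule out every $u \in \cL_r$ with $\psi(u) < tr^{2/3}$ satisfying $T(\bfo, u) + T(u, \bfn) \geq T(\bfo, w) + T(w, \bfn)$. For $u$ with $\abs{\psi(u)}$ much larger than $tr^{2/3}$ (on either side), the deterministic deficit $f(\bfo, \bfn) - f(\bfo, u) - f(u, \bfn)$ is already so large that the one-point upper tail of $T(\bfo, u) + T(u, \bfn)$ suffices. The delicate case is $u$ within $O(r^{2/3})$ of the diagonal, where a naive union bound using the one-point upper tail introduces losses at scale $(n-r)^{1/3}$, which can overwhelm the gain $K t^2 r^{1/3}$ when $n \gg r$. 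The resolution, which I expect is the content of the auxiliary Theorem \ref{thm2}, uses ordering of geodesics combined with coalescence: since two geodesics from nearby points on $\cL_r$ to $\bfn$ merge within distance $O(r)$ past $\cL_r$, the \emph{difference} $T(u, \bfn) - T(w, \bfn)$ has fluctuations only at the local scale $r^{1/3}$, not $(n-r)^{1/3}$. A union bound on a $\Theta(r^{2/3})$-mesh discretization of $\cL_r$ at this smaller scale, combined with Step 1 and taking $K$ large enough, yields the desired lower bound $\P{\TF^n(r) > tr^{2/3}} \geq C_1 \exp\!\br{-c_1 t^3}$.
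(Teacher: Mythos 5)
Your proposal does not address the statement in question. The statement is Theorem \ref{tempfl}, the one-point moderate deviation estimates for the last passage time $T(\bfo,u)$ around its deterministic centering $f(\bfo,u)$ — upper and lower bounds with exponents $t^{3/2}$ for the upper tail and $t^3$ for the lower tail. What you have written is instead an attempted proof of Theorem \ref{nproof}, the lower bound on local transversal fluctuations $\P{\TF^n(r) > tr^{2/3}}$. The mismatch is already visible internally: in Step 1 you \emph{apply} the upper tail \eqref{1} and the lower tail \eqref{2} of Theorem \ref{tempfl} as inputs, so your argument could not possibly serve as a proof of that theorem without being circular.

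Theorem \ref{tempfl} is not proved in the paper by any geometric or geodesic argument; it is a summary of known integrable-probability results. The upper tail and both directions of \eqref{1} and the upper bound in \eqref{2} come from Johansson's identification of $T(\bfo,u)$ in distribution with the top eigenvalue of the Laguerre Unitary Ensemble together with the Ledoux--Rider tail estimates for that eigenvalue, and the matching lower bound on the lower tail in \eqref{2} is the later result of Basu--Ganguly--Hegde--Krishnapur. No construction involving transversal fluctuations, coalescence, or ordering of geodesics can produce these one-point estimates — the asymmetric exponents $t^{3/2}$ versus $t^3$ reflect Tracy--Widom tail behaviour, which has to come from the exact solvability of the model. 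If you intended to prove Theorem \ref{nproof}, you should resubmit against that statement; as a proof of Theorem \ref{tempfl}, the proposal is entirely off target.
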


\subsection{Estimates for transversal fluctuations} Exponential decay for the upper tail of global transversal fluctuation was shown in \cite[Theorem 11.1]{Basu2014Aug}. Later, the optimal bound was found in \cite[Proposition C.9]{Basu2021May}. Upper bound for large local transversal fluctuations for $r \ll n$ was shown in \cite[Theorem 3]{Basu2019Sep} and was improved in \cite[Proposition 2.1]{Balazs2023Aug} . We summarize their results as follows.
\begin{theorem}
    \label{transfl}
    There exist positive constants $c_1, C_1, r_0, n_0, t_0$ such that for $r> r_0, n> r \vee n_0$ and $t > t_0$,
    $$\P{\TF^n(r) > tr^{2/3}} \leq C_1 \exp\br{-c_1 t^3}.$$
\end{theorem}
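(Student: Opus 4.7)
The plan is to derive Theorem \ref{nproof} from the construction produced by Theorem \ref{thm2}, applied at scale $r$ rather than $n$, combined with planar ordering of geodesics and the upper-tail bound of Theorem \ref{transfl} to localize the crossing point of $\Gamma_n$ with $\mathcal{L}_{2r}$. The role of Theorem \ref{thm2} is to supply, on the weights in a region of side $\sim r$, an event forcing a specific geodesic of length comparable to $r$ to deviate by $\sim t r^{2/3}$ from the diagonal.

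Concretely, I would invoke Theorem \ref{thm2} at scale $r$ to produce an event $E$, of probability at least $C \exp(-c t^3)$ and measurable with respect to the weights in $[0, r]^2$, on which the geodesic $\Gamma_{\bfo \to (r, r)}$ passes through a point $q$ with $\psi(q) \geq 2 t r^{2/3}$ at some anti-diagonal level $\phi(q) \leq 2r$. A matching event forcing a downward deviation is available by the $(x, y) \leftrightarrow (y, x)$ symmetry, and it suffices to work with the upward version.

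By planar ordering of geodesics, on $E$, every geodesic $\Gamma_{\bfo \to v}$ with $v \in \mathcal{L}_{2r}$ and $\psi(v) \geq 0$ lies weakly above $\Gamma_{\bfo \to (r, r)}$, and hence also passes through a point with $\psi \geq 2 t r^{2/3}$. Setting $v := \Gamma_n \cap \mathcal{L}_{2r}$, the initial segment of $\Gamma_n$ up to $\mathcal{L}_{2r}$ is exactly $\Gamma_{\bfo \to v}$, so on the event $E \cap \{\psi(v) \geq 0\}$ we get $\TF^n(r) \geq 2 t r^{2/3} > t r^{2/3}$. For the joint probability, I would exploit that weights in $\{\phi < 2r\}$ and $\{\phi \geq 2r\}$ are independent: $E$ is measurable with respect to the former, and conditioning on it, $v$ is the argmax of the fixed profile $v' \mapsto T(\bfo, v')$ plus the independent random profile $(T(v', \bfn))_{v' \in \mathcal{L}_{2r}}$, which is distributionally symmetric about the diagonal. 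A positive-correlation argument (FKG-type in the ``upper versus lower half'' partial order on weights) should then give $\mathbb{P}(\psi(v) \geq 0 \mid E) \geq c > 0$, whence $\mathbb{P}(E \cap \{\psi(v) \geq 0\}) \geq c \, \mathbb{P}(E) \geq C' \exp(-c' t^3)$.

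The principal obstacle is this last joint-probability estimate: on the asymmetric event $E$, which biases the weights to favor an upward detour, one must show that the crossing $v$ still lies in the upper half with uniformly positive probability. Intuitively the same upward bias responsible for the detour also biases $v$ upward, but making this rigorous will require either a careful symmetrization of the construction inside Theorem \ref{thm2}, or a direct conditional estimate on the profile $v' \mapsto T(\bfo, v')$ restricted to $\mathcal{L}_{2r}$. Once this is in place, the deduction of Theorem \ref{nproof} from the above chain is short.
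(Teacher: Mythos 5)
Your proposal does not prove the statement under review. Theorem \ref{transfl} is the \emph{upper} bound $\P{\TF^n(r) > tr^{2/3}} \leq C_1\exp\br{-c_1t^3}$, i.e., the assertion that large local transversal fluctuations are unlikely. What you have sketched is an argument for the \emph{lower} bound $\P{\TF^n(r) > tr^{2/3}} \geq C_1\exp\br{-c_1t^3}$, which is Theorem \ref{nproof}, the paper's main result, not Theorem \ref{transfl}. Moreover, read as a proof of Theorem \ref{transfl} the argument is circular: you explicitly invoke ``the upper-tail bound of Theorem \ref{transfl}'' to control the crossing point of $\Gamma_n$ with $\cL_{2r}$, and you also invoke Theorem \ref{thm2}, whose proof in the paper itself uses Theorem \ref{transfl} inside Lemma \ref{lemma6}. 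Nothing in the proposal yields any upper bound on $\P{\TF^n(r) > tr^{2/3}}$.

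For the record, the paper does not prove Theorem \ref{transfl} at all; it is quoted from \cite{Basu2019Sep} and \cite{Balazs2023Aug}. A proof requires a mechanism quite different from your construction: one shows that if $\Gamma_n$ exits the tube of width $tr^{2/3}$ about the diagonal at some level $\phi \leq 2r$, then it passes through a point $w$ with $\psi(w)$ large, and by the curvature of $f$ the concatenated passage time $T(\bfo,w)+T(w,\bfn)$ through such a point falls short of the typical value of $T(\bfo,\bfn)$ except on an event of probability $\exp(-ct^3)$, by the moderate-deviation bounds of Theorem \ref{tempfl}; a union bound or multi-scale argument over possible exit locations then completes the proof. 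If your intended target was in fact Theorem \ref{nproof}, your sketch is broadly in the spirit of the paper's argument (localize the crossing of $\Gamma_n$ with an antidiagonal line and use ordering of geodesics), but the paper sidesteps the conditional estimate $\P{\psi(v)\geq 0 \mid E} \geq c$ that you flag as the principal obstacle: it introduces the auxiliary path $\Gamma^R$ maximizing the passage time with the weights in $R$ discounted, so that the localization event $E^R$ is independent of the forcing event $A$ by construction, and no FKG-type positive-correlation input is needed.
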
 

\section{Proof of Main Result}
 To motivate our arguments, we provide a proof for the global transversal fluctuation of the geodesic $\Gamma_r$. The arguments essentially follows from \cite[Proposition 1.4]{hammond2018modulus}, see Remark \ref{rmk: t2}.
 \begin{theorem}
    There exist constants $r_0, t_0, c_1, C_1$ such that for all $\epsilon >0$, $r > r_0$ and $(1-\epsilon)r^{1/3} > t > t_0$,
	\label{thm2}
	$$\P{\TF^r(r) \geq tr^{2/3}} \geq C_1e^{-c_1t^3}.$$
\end{theorem}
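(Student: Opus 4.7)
The strategy is to fix a single point $v$ on the midway anti-diagonal $\cL_r$ at transversal distance $tr^{2/3}$ from the straight line joining $\bfo$ to $\bfr$, and to show that with probability at least $C_1 \exp(-c_1 t^3)$ the geodesic $\Gamma_r$ passes through $v$, which immediately forces $\TF^r(r) \geq \psi(v) = tr^{2/3}$. Two key ingredients drive the proof: the upper-tail moderate deviation estimates of Theorem \ref{tempfl}, whose $\exp(-cs^{3/2})$ exponent produces the desired $\exp(-c t^3)$ scaling when applied at $s = \Theta(t^2)$, and the independence of $T(\bfo, v)$ and $T(v, \bfr)$ coming from the disjointness of their vertex supports.

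Take $v := (\lfloor r/2 - tr^{2/3}/2\rfloor, \lceil r/2 + tr^{2/3}/2\rceil)$, so that $\phi(v) = r$ and $\psi(v) = tr^{2/3}$ up to $O(1)$. The hypothesis $t < (1-\epsilon) r^{1/3}$ guarantees that $v_1, v_2 > 0$ with the ratios $v_1/v_2$ and $(r-v_1)/(r-v_2)$ confined to $[h, h^{-1}]$ for some $h = h(\epsilon) > 0$, so Theorem \ref{tempfl} applies. A direct expansion of $f$ yields the deficit
\[ f(\bfo, v) + f(v, \bfr) - f(\bfo, \bfr) = 2r\left(\sqrt{1 - t^2 r^{-2/3}} - 1\right) =: -\delta \cdot t^2 r^{1/3}, \]
with $\delta = \delta(t, r) \in [1, 2]$.

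For a parameter $\lambda > \delta$ to be fixed later, introduce
\[ A := \{T(\bfo, v) - f(\bfo, v) \geq \lambda t^2 r^{1/3}\}, \quad B := \{T(v, \bfr) - f(v, \bfr) \geq \lambda t^2 r^{1/3}\}. \]
These events are independent since the weights involved lie in the disjoint sets $[0, v_1] \times [0, v_2] \setminus \{v\}$ and $[v_1, r] \times [v_2, r] \setminus \{\bfr\}$. By the upper-tail lower bound in Theorem \ref{tempfl} (with the parameter $s = \lambda t^2$), $\P{A}, \P{B} \geq C \exp(-c\lambda^{3/2} t^3)$, so $\P{A \cap B} \geq C^2 \exp(-2c\lambda^{3/2} t^3)$. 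Adding the event $D := \{T(\bfo, \bfr) \leq f(\bfo, \bfr) + (2\lambda - \delta) t^2 r^{1/3}\}$, whose complement obeys $\P{D^c} \leq C'\exp(-c'(2\lambda - \delta)^{3/2} t^3)$ by the upper-tail upper bound, the chain
\[ f(\bfo, \bfr) + (2\lambda - \delta) t^2 r^{1/3} \leq T(\bfo, v) + T(v, \bfr) \leq T(\bfo, \bfr) \leq f(\bfo, \bfr) + (2\lambda - \delta) t^2 r^{1/3} \]
on $A \cap B \cap D$ collapses to equalities, forcing $\Gamma_r$ through $v$. Inclusion--exclusion then gives $\P{\TF^r(r) \geq tr^{2/3}} \geq \P{A \cap B} - \P{D^c}$.

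The delicate point, and the main obstacle, is tuning $\lambda$ so that the exponent $c'(2\lambda - \delta)^{3/2}$ in the bound on $\P{D^c}$ strictly exceeds the exponent $2c\lambda^{3/2}$ in the bound on $\P{A \cap B}$, ensuring that the first term dominates. Since $(2\lambda - \delta)^{3/2}/\lambda^{3/2} \to 2\sqrt{2} > 2$ as $\lambda \to \infty$, and since the constants $c, c'$ of Theorem \ref{tempfl} both encode the Tracy--Widom $e^{-\frac{4}{3}s^{3/2}}$ asymptotic and are thus comparable, a sufficiently large $\lambda$ (chosen within the range allowed by the cutoff $\lambda t^2 < r^{2/3}$ in Theorem \ref{tempfl}) makes $\P{D^c} \leq \tfrac{1}{2}\P{A \cap B}$ for $t > t_0$, delivering the claimed bound $C_1 \exp(-c_1 t^3)$. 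The construction of the point $v$ and the events $A, B$ feeds directly into the subsequent proof of Theorem \ref{nproof}.
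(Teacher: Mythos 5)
There is a fatal flaw in the core mechanism of your argument. On $A \cap B$ you already have $T(\bfo,\bfr) \geq T(\bfo,v)+T(v,\bfr) \geq f(\bfo,\bfr)+(2\lambda-\delta)t^2r^{1/3}$, so $A\cap B$ is (almost surely) contained in $D^c$: your inclusion--exclusion bound $\P{A\cap B}-\P{D^c}$ is $\leq 0$, and the event $A\cap B\cap D$ itself forces the continuous random variable $T(\bfo,\bfr)$ to \emph{equal} the deterministic constant $f(\bfo,\bfr)+(2\lambda-\delta)t^2r^{1/3}$, hence has probability zero. The thresholds in $A$, $B$ and $D$ are perfectly matched with no slack, so the "collapse to equalities" is not a forcing argument but a proof that the event is null. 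Nor can this be repaired by retuning $\lambda$: to force the geodesic through $v$ you must compare the path through $v$ against the passage time \emph{constrained to avoid} $v$ (or to stay in $\bc{\psi< tr^{2/3}}$), and that constrained time has, to the accuracy of Theorem \ref{tempfl}, the same upper tail as the unconstrained one; since your good event is itself an upper-tail event for $T(\bfo,\bfr)$, the competition $\P{A\cap B}>\P{D^c}$ cannot be won. Your appeal to $(2\lambda-\delta)^{3/2}/\lambda^{3/2}\to 2\sqrt2>2$ needs the constants $c,c'$ in the two tails to coincide, which Theorem \ref{tempfl} does not assert (it only gives $c'\leq c$ a priori), and a difference of two exponentially small terms cannot be controlled from such non-matching constants.

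More fundamentally, the statement your strategy targets is false: the geodesic meets $\cL_r$ at exactly one of roughly $r^{2/3}$ typical lattice points, so $\P{v\in\Gamma_r}=O(r^{-2/3})$ for any fixed $v$, and no bound of the form $C_1e^{-c_1t^3}$ uniform in $r$ can hold for passage through a single point. Any correct argument must force the geodesic into a region of transversal width of order $r^{2/3}$. This is exactly what the paper does: it places a rectangle $R$ of width $r^{2/3}$ and length $r/3$ at displacement $(t+1)r^{2/3}$, uses Lemma \ref{lemma6} (a chaining of $\sim t^{3/2}$ independent blocks, each contributing a gain of $10t(\delta r)^{1/3}$ at probability cost $e^{-ct^{3/2}}$) to build inside $R$ a path whose passage time exceeds its mean by $10t^2r^{1/3}$ with probability $\geq Ce^{-ct^3}$, and then checks that this gain strictly dominates the geometric detour cost $\approx 6(t+1)^2r^{1/3}$. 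Because of that strict domination, the connecting events $B,C$ and the comparison event $D$ (which bounds paths confined to $S=\bc{\psi\leq tr^{2/3}}$, disjoint from $R$) are all \emph{typical}, with probability $1-Ce^{-ct^3}$, so there is no delicate competition between exponentials. The single-point construction and the independence observation in your proposal do not survive into a proof; the rectangle construction and the margin $10t^2$ versus $6(t+1)^2$ are the missing ideas.
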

Consider a rectangle in our integer lattice formed by the lines $x-y = 0$, $x+y = 0$, $y = x + r^{2/3}$ and $x + y = r/3$. Let the midpoints of the smaller sides be denoted by $u,v$. Let $A$ be the event that a path $\gamma$ from $u$ to $v$ lies inside this rectangle and has a passage time greater than $4r + 10t^2 r^{1/3}$.
\begin{lemma}
	\label{lemma6}
	There exist constants $c,C, r_0, t_0$ such that for $r> r_0$ and $t>t_0$,
	$$\P{A} \geq Ce^{-ct^3}.$$ 
\end{lemma}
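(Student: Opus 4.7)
The event $A$ is equivalent to the restricted last-passage time $T_{\mathrm{rect}}(u,v)$ through the rectangle exceeding the stated threshold. I plan to realise $A$ as (essentially) the intersection of two events: an upper-tail moderate-deviation event $D$ for the unrestricted passage time $T(u,v)$, and a localisation event $E$ that the geodesic $\Gamma_{u,v}$ stays inside the rectangle. On $D \cap E$ the restricted passage time coincides with the unrestricted one, so $D \cap E \subseteq A$, and it suffices to establish a lower bound on $\P{D}$ together with $\P{D \cap E^c} \leq \tfrac{1}{2}\P{D}$.

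For the moderate-deviation step, translation invariance gives $T(u,v) \stackrel{d}{=} T(\bfo, v-u)$, and $v-u$ has both coordinates of order $r$ with ratio close to $1$, so Theorem \ref{tempfl} is applicable. Using the lower-bound side of \eqref{1} with deviation parameter $s = C_0 t^2$ (admissible because the hypothesis $t < (1-\epsilon) r^{1/3}$ keeps $s$ below $\phi(v-u)^{2/3}$) yields
\[
\P{T(u,v) > f(u,v) + C_0 t^2 r^{1/3}} \geq C_1 \exp(-c_1 t^3),
\]
and choosing $C_0$ appropriately aligns this threshold with the one in the definition of $A$.

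For the localisation step, I would decompose the unrestricted geodesic $\Gamma_{u,v}$ at its first exit point $w$ on one of the long sides of the rectangle, using the identity $T(u,v) = T(u,w) + T(w,v) - X_w$. Because $u$ and $v$ lie on the midline of the rectangle and the long sides are at transversal offset $\Theta(r^{2/3})$ from that midline, a second-order Taylor expansion of $f$ supplies a deterministic detour penalty $f(u,v) - f(u,w) - f(w,v) \geq c_2 r^{1/3}$, which moreover strengthens quadratically as $w$ moves away from the centre of the long side. Upgrading the threshold in the upper-tail half of \eqref{1} by this penalty, applying it to $T(u,w)$ and $T(w,v)$ individually, and then summing over exit points $w$ organised by displacement (so that the $O(r)$ polynomial union-bound factor is absorbed into the strengthened exponent away from the centre) produces the desired $\P{D \cap E^c} \leq \tfrac{1}{2} \P{D}$.

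Combining the two steps gives $\P{A} \geq \P{D \cap E} = \P{D} - \P{D \cap E^c} \geq \tfrac{1}{2} \P{D} \geq \tfrac{C_1}{2} \exp(-c_1 t^3)$, as required. I expect the localisation step to be the main obstacle: the rectangle's transversal width $r^{2/3}$ is only a constant multiple of the natural transversal-fluctuation scale $(r/3)^{2/3}$, so the TF bound of Theorem \ref{transfl} by itself cannot control $\P{E^c}$ to sufficient accuracy, and one must exploit the quadratic strengthening of the detour penalty away from the centre of the long side to turn the union bound over $O(r)$ exit points into a convergent sum.
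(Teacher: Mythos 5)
Your route is genuinely different from the paper's and, as written, the localisation step does not close. You request a single moderate deviation of size $C_0t^2r^{1/3}$ for the unrestricted time $T(u,v)$ across the whole rectangle and then try to show the geodesic stays inside the rectangle on a fixed proportion of that event. The obstacle you flag is real, but your proposed cure does not work: for an exit point $w$ on a long side at antidiagonal distance $\tau$ from $u$, the detour penalty $f(u,v)-f(u,w)-f(w,v)$ is of order $\ell^2\br{\tau^{-1}+(L-\tau)^{-1}}$ with $\ell\asymp r^{2/3}$ and $L\asymp r$, which is $c\,r^{1/3}$ up to a factor that stays \emph{bounded} for all $\tau\in[L/4,3L/4]$; the ``quadratic strengthening'' only becomes effective once $\tau$ or $L-\tau$ is $o(r)$. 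Hence the union bound over the $\Theta(r)$ bulk exit points yields $\Theta(r)\,e^{-ct^3}$, which is not $\le\tfrac12\P{D}\asymp e^{-c_1t^3}$ once $r$ is large at fixed $t$, so the bound $\P{D\cap E^c}\le\tfrac12\P{D}$ is not established. (A secondary issue: even per antidiagonal line you need a sup-version of the one-point estimate \eqref{1} over the boundary segment, which is not among the tools quoted in the paper.)

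The paper avoids this difficulty entirely by a concatenation argument: it cuts the rectangle into $N=\delta^{-1}\asymp t^{3/2}$ strips, asks in each strip only for a deviation of $10t(\delta r)^{1/3}$ (cost $e^{-ct^{3/2}}$ by \eqref{1}), and observes that at the strip scale the rectangle's width $r^{2/3}$ is $\delta^{-2/3}\asymp t$ times the local fluctuation scale $(\delta r)^{2/3}$, so Theorem \ref{transfl} makes localisation essentially free within each strip. Independence of the strips multiplies the $N$ probabilities to give $e^{-ct^3}$, and the per-strip gains add up to $10t\delta^{-2/3}r^{1/3}\asymp t^2r^{1/3}$. Your one-shot approach would require quantitative control of the restricted passage time at exactly the transversal-fluctuation scale, which is essentially as hard as the lemma itself; I recommend switching to the multi-strip decomposition.
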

\begin{remark}
    Our proof is a combination of arguments that are known in the literature. They can be found in \cite[Lemma 12.1]{Basu2014Aug} and \cite[Theorem 2]{Ganguly2023Jun}. For completeness, we provide a short self-contained proof.
\end{remark}
\begin{proof}
\begin{figure}
        \centering
        \includegraphics[scale = 0.8]{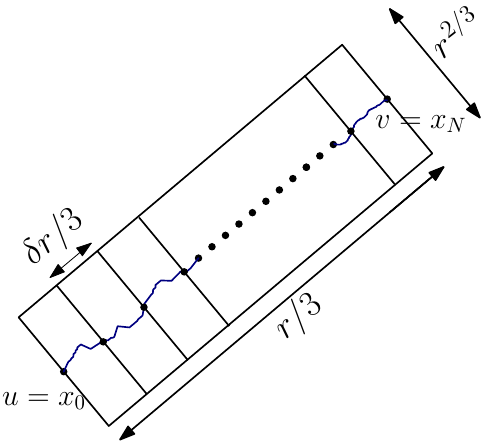}
        \caption{The proof of Lemma \ref{lemma6} is illustrated here; the blue paths indicate a path with large passage time between the midpoints that lie inside the rectangle. Connecting these blue paths gives us a long path between the points $x_0$ and $x_N$.}
        \label{fig:1}
    \end{figure}
	Let $\delta >0$, we shall later choose $\delta$ depending on $t$. We partition the given rectangle into $N:= \delta^{-1}$ pieces using equidistant lines parallel to the smaller side; without loss of generality we assume that $\delta^{-1}$ is a natural number. We denote the midpoint of the $i^{th}$ line by $x_i$ and define $x_0 = u$ and $x_{N} =v$. Refer to Figure \ref{fig:1} for an illustration of the setup.
 
	Let $A_i$ be the event that there is a path lying inside the rectangle from $x_{i-1}$ to $x_i$ which has a passage time greater than $4\delta r + 10t\br{\delta r}^{1/3}$. By Theorem \ref{transfl} and Theorem \ref{tempfl}, we have constants $c_1, c_2, C_1, C_2$ such that
	\begin{align*}
	\P{A_i} &\geq C_1e^{-c_1t^{3/2}} - C_2e^{-c_2(\delta^{-2/3})^3}\\
	&\geq C_1e^{-c_1t^{3/2}} - C_2e^{-c_2(\delta^{-1})}.
	\end{align*}
	For the right-hand side to be positive, we require $\delta^{-1} \geq c_0t^{3/2}$ for a suitable choice of $c_0$. Thus we choose $\delta^{-1} = c_0t^{3/2}$ and assume $t$ to be large enough. This allows us to write
	$$\P{A_i} \geq Ce^{-c t^{3/2}} \geq e^{-c t^{3/2}}.$$
    Now, since all the events $A_i$ are independent of each other, we have that
	$$\P{\bigcap_{1 \leq i \leq N} A_i} \geq \br{e^{-c t^{3/2}}}^{\delta^{-1}} \geq e^{-c t^{3}}.$$
	But if the event $A_i$ holds for all $i\in \bc{1,2,3 \dots N}$, then the concatenation of these paths will have passage time greater than $ 4r + 10t \delta^{-2/3} r^{1/3} = 4r + 10ct^2r^{1/3}$.
	Thus, we have proved our claim that,
	$$\P{A} \geq Ce^{-c t^{3}}.$$
\end{proof}

\begin{proof}[Proof of Theorem \ref{thm2}]
    \begin{figure}
        \centering
        \includegraphics[scale = 0.9]{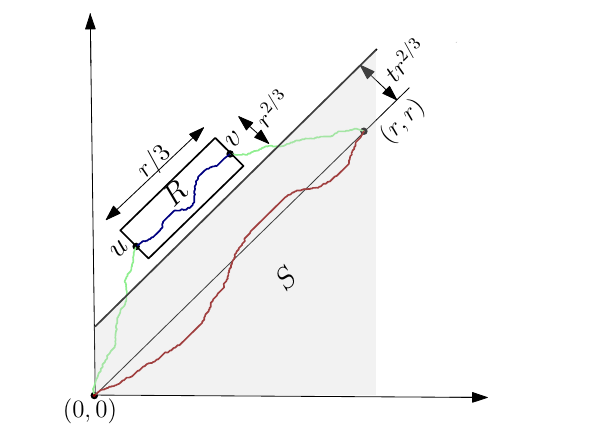}
        \caption{Illustration for the proof of Theorem \ref{thm2}. The blue path corresponds to event $A$, and the green paths connect the good blue path to the origin and the point $\bfr$. Events $B$ and $C$ ensure that the green paths are not too bad. On $D$, any path contained in $S$ will not have a 
        very large passage time. Thus on $A \cap B\cap C \cap D$, the geodesic from $\bfo$ to $\bfr$ must exit the region $S$.}
        \label{fig:2}
    \end{figure}
	Consider a rectangle $R$ obtained by translating the rectangle described in Lemma \ref{lemma6}. We specify the midpoints of the shorter sides to be
	$$u := \br{\frac{r}{3} - (t+1)r^{2/3}, \frac{r}{3} + (t+1)r^{2/3}} \text{     and        } v:= \br{\frac{2r}{3} - (t+1)r^{2/3}, \frac{2r}{3} + (t+1)r^{2/3}}.$$
	We define the set
    $$S= \bc{v : \psi\br{v} \leq tr^{2/3}, v \in \bZ^2}.$$ By our construction, the rectangle $R$ and set $S$ do not intersect. We define the event $A$ as we did for Lemma \ref{lemma6}. Furthermore, we define the events
	\begin{align*}
		B &:= \bc{T(\bfo, u) \geq  f(\bfo, u) - t r^{1/3}},\\ 
		C &:= \bc{T(v, \bfr) \geq  f(v,\bfr) - t r^{1/3}}.
	\end{align*}
	 Let $D$ be the event that all paths from $\bfo$ to $\bfr$ that stay inside the region $S$ have passage time less than $4r + t^2 r^{1/3}$.
	Clearly, 
	$$\P{D} \geq \P{T(\bfo, \bfr) \leq 4r + t^2 r^{1/3}}.$$
	By Theorem \ref{tempfl}, following \eqref{2} we have that for all $\epsilon >0$ and $t \leq (1-\epsilon)r^{1/3}$
	\begin{align}
        \label{3}
		\P{B}, \P{C} \geq 1 - Ce^{-ct^3}.
	\end{align}
        Similarly by \eqref{1}, we have that
        \begin{align}
        \P{D} \geq 1 - \ce{t}.
            \label{4}
        \end{align}        
	   On the event $A\cap B \cap C,$ the geodesic from $\bfo$ to $u$, the path in $R$ from Lemma \ref{lemma6} and the geodesic from $v$ to $\bfr$ form a path with passage time
	\begin{align*}
	&\geq 4r/3 + 10t^2r^{1/3} + f(\bfo, u) + f(v, \bfr) - 2t r^{1/3} \\
	&\geq 4r/3 + 10t^2r^{1/3} + 8r/3 -6(t+1)^2r^{1/3} - 2t r^{1/3}- t^2 r^{1/3} \\
	&\geq 4r + 2t^2 r^{1/3}
	\end{align*}
	for $t$ large enough, by using Taylor expansion of $f$ and dominating the error terms by $t^2r^{1/3}$. 
 
	Thus, on the event $A \cap B \cap C \cap D$, we have a path from $\bfo$ to $\bfr$ that is not contained in $S$, which has a higher passage time than any path in $S$. This prevents the geodesic $\Gamma_r$ from being contained in the region $S$. Thus, there exists a point $w \in \Gamma_r$ such that $\psi\br{w} > tr^{2/3}$ causing $\TF^r(r) \geq tr^{2/3}$. Furthermore, the event $A$ is independent of the event $B \cap C \cap D$. Thus by Lemma \ref{lemma6}, \eqref{3} and \eqref{4}, we have that
	$$ \P{\TF^r(r) \geq tr^{2/3}} \geq \P{A \cap B\cap C \cap D} \geq Ce^{-ct^3} \br{1-3Ce^{-ct^3}}.$$
	Now, by taking $t$ to be large enough, we have that
	$$\P{\TF^r(r) \geq tr^{2/3}} \geq Ce^{-ct^3}.$$
\end{proof}
\begin{remark}
    \label{rmk: t2}
    The proof of Theorem \ref{thm2} uses a simplified version of the arguments made in \cite[Proposition 1.4]{hammond2018modulus}. Instead of dealing with both cases of High and Low as in \cite[Proposition 1.4]{hammond2018modulus}, we showed that case High occurs by Lemma \ref{lemma6} and modified the arguments accordingly for exponential LPP.
\end{remark}
We are now in a position to present the proof of Theorem \ref{nproof}. We shall use the arguments we made in Theorem \ref{thm2} along with the ordering of geodesics to prove our result.
\begin{proof}[Proof of Theorem \ref{nproof}]
    \begin{figure}
        \begin{subfigure}[b]{0.45 \textwidth}
            \includegraphics[scale = 0.8]{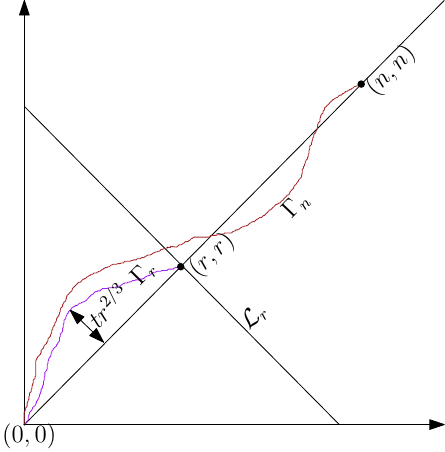}
            \label{fig:3}
            \caption{$\Gamma_r$ forcing $\Gamma_n$ to have large fluctuation}
        \end{subfigure}
        \hfill
        \begin{subfigure}[b]{0.45 \textwidth}
            \includegraphics[scale = 0.8]{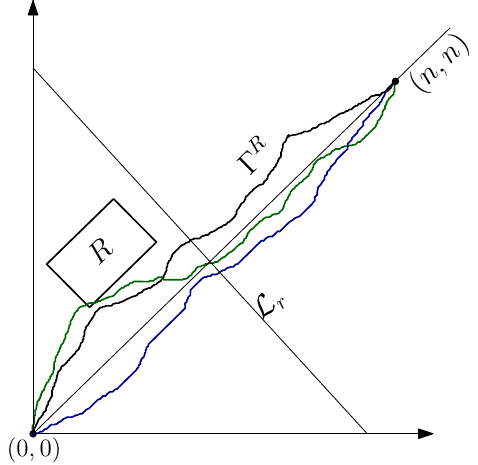}
            \caption{The event $E^R$ implying event $E$}
            \label{fig:4}
        \end{subfigure}
        \caption{Illustrations for Theorem \ref{nproof}:(A) Since two geodesics cannot intersect twice, the condition that $\Gamma_n$ has non-negative fluctuation at $\cL_r$ forces it to be above $\Gamma_r$. (B) If the event $E$ does not happen while $E'$ does, then $\Gamma_n$ must look like either the green or blue paths. The maximality of $\Gamma^R$ implies that the blue path cannot be a geodesic. Meanwhile, suppose the green path is a geodesic. In that case, the path we get from following $\Gamma^R$ initially and then shifting to the green path after, the last intersection will contradict the maximality of $\Gamma^R$.}
        \hfill
    \end{figure}
    As in the proof of Theorem \ref{thm2}, define the sets $A,B,C,D$ and the rectangle $R$. Thus, as previously shown, on the event $A \cap B \cap C \cap D$, we will have that $\TF^r(r) \geq tr^{2/3}$. In fact, we have shown that there exists a point $w \in \Gamma_r$ such that $$\psi\br{w} > tr^{2/3}.$$
    
    We shall now use $\Gamma_n(r)$ to denote the intersection point of the geodesic $\Gamma_n$ and the line $\cL_r$. We say that the geodesic has negative fluctuation at $\cL_r$ if $\psi\br{\Gamma_n(r)} < 0$ and call the fluctuation non-negative otherwise. Define the event $E= \bc{\psi\br{\Gamma_n(r)} \geq 0}$. Now on the event $A \cap B \cap C \cap D \cap E$, $\Gamma_n$ will be forced to be to the left of $\Gamma_r$, since any two geodesics can not intersect twice. Thus, the point $w$ on $\Gamma_r$ will force $\TF^n(r) \geq tr^{2/3}$.
    \begin{align*}
        \P{A\cap B \cap C \cap D \cap E} &= \P{A} \br{\P{B \cap C \cap D \cap E \:\vert A}}
        \\ &\geq \P{A} \br{\P{E \vert A} - \P{\br{B \cap C \cap D}^c \vert A}}
        \\ &= \P{A} \br{\P{E \vert A} - \P{\br{B \cap C \cap D}^c}}.
    \end{align*}
    Our aim now will be to lower bound the term $\P{E \vert A}$. Let $T^R(\gamma)$ be the passage time for the upright path $\gamma$ to go from $\bfo$ to $\bfn$ after discounting the weights in $R$. Therefore,
    $$T^R(\gamma) = \sum_{u \in \gamma, u \notin R} X_u.$$
    We use $\Gamma^R$ to denote the path from $\bfo$ to $\bfn$ with maximal $T^R$. Since our random variables are continuous, such a path is almost surely unique. Let $\Gamma^R(r)$ denote the intersection point of $\cL_r$ and $\Gamma^R$. We define $$E^R = \bc{\Gamma^R \text{ lies below } R \text{ and } \psi\br{\Gamma^R(r)} \geq 0 \text{ at } \cL_r}.$$ 
    Notice that the event $E^R \subseteq E$, as the path $\Gamma^R$ forces the path $\Gamma_n$ to have non-negative fluctuation at $\cL_r$. This is because increasing the weights from 0 in $R$ can only increase the transversal fluctuation of the geodesic at $\cL_r$.
    
    At the same time, on the event 
    \[
    E' = \bc{\Gamma_n \text{ lies below } R \text{ and }\psi\br{\Gamma_n(r)} \geq 0},
    \]
    the path $\Gamma_n$ and $\Gamma_R$ will be the same. Thus $E' \subseteq E^R$ and  for large enough $t$, by Theorem \ref{transfl}
    $$\P{E^R} \geq \P{E'} \geq \frac{1}{2} - \ce{t} \geq \frac{1}{4}.$$
    
    $E^R$ is also independent of the event $A$ since $E^R$ does not depend on the passage time of the vertices inside $R$ while $A$ depends only on the weights of those vertices. Thus, we have that
    \begin{align*}
        \P{A\cap B \cap C \cap D \cap E} &\geq \P{A} \br{\P{E^R \vert A} - \P{\br{B \cap C \cap D}^c}} \\
        &=\P{A}\br{\P{E^R} - \P{\br{B \cap C \cap D}^c}}.
    \end{align*}
    Now by \eqref{3}, \eqref{4} and Lemma \ref{lemma6}, we can claim that
    \begin{align*}
        \P{A}\br{\P{E^R} - \P{\br{B \cap C \cap D}^c}} &\geq \ce{t} \br{1/4 - 3\ce{t}} \\
        & \geq \ce{t}.
    \end{align*}
\end{proof}
\begin{remark}
    The result directly puts us in a position to comment on the fluctuations of semi-infinite geodesics. Theorem \ref{nproof} will give us a similar lower bound for large fluctuations of the semi-infinite geodesics in the direction of $x=y$ since the finite geodesics converge subsequentially to the semi-infinite geodesic. Such a lower bound for semi-infinite geodesics was proven in \cite[Theorem 2.8]{Timo}. Our result also holds for finite geodesics while their result hold for all semi-infinite geodesics in non-axial directions.
\end{remark}
\begin{remark}
    The proof uses the event $E^R$, which is independent of the rectangle $R$ and the symmetry of the problem. While considering other non-axial directions, one can still produce similar constructions for the event $E^R$, but the symmetry is lost. However, it is to be noted that the symmetry is used to claim that the $\P{E^R}$ is greater than some constant; this should be achievable even in the non-axial directions. It is also to be noted that we did not use any explicit property of exponential LPP. All our arguments follow from the moderate deviation result from integrable probability. Thus, the same arguments should hold for other solvable models like the Poissonian LPP and Geometric LPP. 
\end{remark}
\begin{remark}
    We have proven a lower bound for the maximal transversal fluctuation of the geodesic up to the line $\cL_r$. The same lower bound (up to exponents) should also hold for large transversal fluctuation of the geodesic at the line $\cL_r$. However, this requires some refinement of the arguments we make.
\end{remark}
\subsection*{Acknowledgement} 
I thank my supervisor, Riddhipratim Basu, for the useful discussions and comments.

\bibliographystyle{alpha}
\bibliography{bib}
\end{document}